\documentclass[preprint,12pt]{elsarticle}
\usepackage[a4paper, total={6in, 8in}, margin=0.95in]{geometry}
\usepackage{amsmath,amsfonts,amssymb,amsthm,epsfig,epstopdf,url,array}
\usepackage[retainorgcmds]{IEEEtrantools}
\usepackage{graphicx}
\usepackage{xcolor}
%\definecolor{darkblue}{rgb}{0,0,0.5} 
\usepackage{transparent}
\usepackage{latexsym}
\usepackage{float}
%\usepackage{lineno}
%\linenumbers
\usepackage[font=small,labelfont=bf]{caption}
\usepackage{subcaption}
\usepackage{lineno}

\theoremstyle{plain}
\newtheorem{thm}{Theorem}[section]
\newtheorem{lem}[thm]{Lemma}

\newtheorem{cor}{Corollary}

\theoremstyle{plain}
\newtheorem{defn}{Definition}[section]

\theoremstyle{definition}
\newtheorem*{rem}{Remark}

%--------------------------------------

\newcommand{\cref}[1]{{\it Chapter~\ref{#1}}}

%---------------------------------------

\numberwithin{equation}{section}
%----------------------------------------

\journal{Journal}
\begin{document}

\begin{frontmatter}
\title{Stability and decay estimates for the marine riser equation in the presence of time dependent data.}%\tnoteref{t1}
%\tnotetext[t1]{PhD thesis supported by MEDASTAR office/ University of Oviedo/ Spain.}
\author{Waleed S. Khedr}%\corref{cor1}}
\ead{waleedshawki@yahoo.com}
%\cortext[cor1]{Corresponding author.}
%\author{Sergey I. Shmarev}
%\ead{shmarev@uniovi.es}
\address{waleedshawki@yahoo.com}
\begin{abstract}
In this article we investigate the dynamics of the initial-boundary value problem for the nonlinear marine riser equation in the presence of time dependent boundary conditions at the top end and a time dependent coefficient of the nonlinear drag force. We introduce sufficient conditions on these functions to maintain the structural stability of the system. We deduce their maximum rates of growth to guarantee that the zero solution is globally asymptotically stable.
\end{abstract}
\begin{keyword} 
Initial-Boundary value problem, Marine risers, Structural stability, Decay estimates, Asymptotic behaviour
\end{keyword}
\end{frontmatter}
%000000000000000000000000000
\section{Introduction}
The marine riser equation represents the balance of forces exerted on an elastic pipe. While some of these forces are causing displacements which we interpret as deflection in the body of the pipe, other forces are exerted towards the restoration of the pipe to its natural vertical orientation which we describe it mathematically as the zero solution. In mathematical studies of this equation, it is common to assume non zero initial data that represent a deflected pipe, and then we study the conditions under which the pipe can be restored to its vertical position (the zero solution). Some of these forces are exerted naturally due to the rigidity of the pipe, the effects of the fluid flowing inside or the external effects as waves and currents in the sea; others are added to play the role of the dissipative component of the system and these can be proportional to the amount of displacement (as in the simple case of using a spring) or they can be proportional to the rate of deflection so that they can react dynamically to any forces deforming the pipe; this is what we commonly refer to as the nonlinear drag force. More details about modelling and engineering aspects of the problem can be found in a number of publications and books, for example see \cite{bernitsas,jjh,effect2014}.\\

Investigating the global asymptotic stability of zero solutions of higher order nonlinear wave equations was carried out by P. Marcati in \cite{marcati84}. In \cite{kohl93} M. K\"ohl considered a one dimensional marine riser equation with constant coefficients, quadratic drag force, and fixed boundaries and he proved that the zero solution is stable. A more general model with higher order of nonlinearity and spatial dependent coefficient of effective tension was considered in \cite{kalantarov97}, where the authors managed to prove that the zero solution is globally asymptotic stable and they deduced the rate of decay of the solution when $t\rightarrow\infty$. Extension of these results to the multidimensional case was established in \cite{gur2010}. Continuous dependence of the solution on the parameters of the problem was shown in \cite{celebi}. For more results on the decay and asymptotic behaviour of solutions of semilinear hyperbolic problems and nonlinear dissipative wave equations refer to \cite{haraux88,nakao91} and the references within. Also more details regarding the continuous dependence on the parameters can be found in \cite{ames}.\\

In this article we consider the same model as in \cite{celebi}; only we assume the coefficient of the effective tension to be a function in the spatial variables. Unlike previous studies, we assume the coefficient of the nonlinear drag force to be a function in time and we also consider a time dependent boundary conditions at the top end of the riser. This article will be organized such that we state our problem in section \ref{statement}. In section \ref{pre} we introduce some essential estimates to be used repeatedly. Section \ref{estimates} contains an introduction to a form of an Energy functional and we investigate its properties under some conditions on the paramters and the boundary conditions. In section \ref{convergence} we estimate the rate of decay of this functional, from which we deduce the allowed rate of growth in time of the boundary conditions and the coefficient of the nonlinear drag force so that the structural stability of the system remains maintained globally in time. Finally, in section \ref{con} we summarize our conclusions.
%%%%%%%%%%%%%%%%%%%%%%%%%%%%%%%%%%%%%%%%%
\section{Statement of the problem}
\label{statement}
In this article we will try to represent a practical study of the marine riser problem by trying to imitate some realistic scenario. On one hand we will restrict ourselves to a cylindrical domain in $\mathbb{R}^3$. That is to say that $\Omega$ is a uniform cylinder of radius $\rho$ and height $h$ placed vertically such that $z\in[0,h]$, where $z=0$ represents the bottom of the riser at seabed and $z=h$ represents the top end of the riser. On another hand we will define some of the parameters of the problem as functions in either the spatial or the time variables. Our model takes the form:
\begin{equation}
\left.
 \begin{IEEEeqnarraybox}[
 \IEEEeqnarraystrutmode
 \IEEEeqnarraystrutsizeadd{5pt}
 {2pt}
 ][c]{l}
 u_{tt}+k\Delta^2u-\nabla\cdot\left(a(x)\nabla u\right) +\vec{g}\cdot\nabla u_t+b(t)|u_t|^pu_t=0,\,\text{in}\;\Omega\times(0,t),
 \\
 u(x,0)=u_0(x),\quad u_t(x,0)=u_1(x),\quad\text{in}\;\Omega,
 \\
 \left.u\right|_{z=h}=\phi(t),\,\left.\frac{\partial u}{\partial n}\right|_{z=h}=\alpha(t),\,\text{and}\,\left.u\right|_{z=0}=\left.\frac{\partial u}{\partial n}\right|_{z=0}=\left.\Delta u\right|_{\partial\Omega}=0.
 \end{IEEEeqnarraybox}
\right.
\label{mainmodel}
\end{equation}

This model represents a marine riser with the top end being allowed to move horizontally, where $\phi(t)$ and $\alpha(t)$ can be thought of as the dynamics of the top end of the riser due to the effects of waves, currents or any other form of external forces. The solution $u(x,t)$ represents the deflection of the pipe, and since we chose a cylindrical domain then it is natural to assume that the deflection is radial; that is to say that $u(x,t)\geq0$. Being the case, we shall assume as well that $\phi(t)\geq0$ is the radial (horizontal) motion of the top end. However, we shall not make any similar assumptions about the sign of $\alpha(t)$.\\

In the above model we take $p\geq 1$ as a given number. The constant $k>0$ denotes the flexural rigidity of the pipe. We will consider the coefficient of the effective tension as a function in the space such that $a:\mathbb{R}^3\mapsto\mathbb{R}$ and we assume that $a(x)$ is a $C^0$ function. We denote by $a_h$ the value of $a(x)$ at $z=h$. The coefficient of the nonlinear drag force $b:\mathbb{R}\mapsto\mathbb{R}$ is a $C^0$ function and such that $b(t)\geq b_{0}>0$.\\

The effect of the Coriolis force is a constant vector such that $\vec{g}=\{g_1,g_2,g_3\}$ and for simplicity we shall assume that $g_3>0$. The initial functions $u_0(x)$ and $u_1(x)$ are chosen such that $u_0\in H^2(\Omega)$ and $u_1\in L^2(\Omega)$. Finally, the function $\phi(t)\in C^1[0,\infty)$ and $\alpha(t)\in C^0[0,\infty)$.
%%%%%%%%%%%%%%%%%%%%%%%%%%%%%%%%%%%%%%%%%%%%%%%%
\section{Preliminaries:}
\label{pre}
In this section we derive some inequalities that we shall use repeatedly through the rest of this article, and we introduce each of them in a form of a corollary. In the rest of this article we will use the notation $\|.\|$ to denote the norm in the space of $L^2(\Omega)$. Otherwise, we shall use the standard sub-indexed terminology. We will also make the assumption that $h>1/2$; it is logical to make such an assumption given the nature of the problem. 
%%%%%%%%%%%%%%%%%%%%%%%%%%%%%%%%%%5555
\begin{cor}
Given the boundary conditions in \eqref{mainmodel}, the following estimate holds:
\begin{equation}
\int_\Omega u^2dx\leq 4h^2\left(\int_\Omega|\nabla u|^2dx+\pi\rho^2\phi^2(t)\right).
\label{friedineq}
\end{equation}
\label{Friedest}
\end{cor}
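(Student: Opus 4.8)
The plan is to reduce this to a one-dimensional Friedrichs-type (Poincaré) inequality in the vertical variable $z$, exploiting the fact that $u$ vanishes together with $\partial u/\partial n$ at $z=0$. Fix $x=(x_1,x_2,z)\in\Omega$ and write $w(z)=u(x_1,x_2,z)$. Because the cylinder is oriented vertically, the boundary condition $u|_{z=0}=0$ gives $w(0)=0$, so by the fundamental theorem of calculus $w(z)=\int_0^z w_\zeta(\zeta)\,d\zeta$. However, since we want the right-hand side to involve $\phi(t)=u|_{z=h}$ rather than an integral running all the way up, the cleaner route is to integrate down from the top: write $w(z)=\phi(t)-\int_z^h w_\zeta(\zeta)\,d\zeta$, so that $w(z)^2\le 2\phi^2(t)+2\left(\int_z^h w_\zeta\,d\zeta\right)^2\le 2\phi^2(t)+2h\int_0^h w_\zeta^2\,d\zeta$ by Cauchy--Schwarz, using $h-z\le h$.

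First I would integrate this pointwise bound over $z\in[0,h]$ to get $\int_0^h w(z)^2\,dz\le 2h\phi^2(t)+2h^2\int_0^h w_\zeta^2\,d\zeta$. Next I would integrate over the disk cross-section $D$ of radius $\rho$ in the $(x_1,x_2)$-variables; since $|D|=\pi\rho^2$ and $w_\zeta=\partial u/\partial z$ is one component of $\nabla u$, this yields
\begin{equation}
\int_\Omega u^2\,dx\le 2h\,\pi\rho^2\,\phi^2(t)+2h^2\int_\Omega|\nabla u|^2\,dx.
\label{eq:propbound}
\end{equation}
Finally I would coarsen the constants: under the running assumption $h>1/2$ we have $2h<4h^2$ and $2h^2\le 4h^2$, so the right-hand side of \eqref{eq:propbound} is bounded by $4h^2\big(\int_\Omega|\nabla u|^2\,dx+\pi\rho^2\phi^2(t)\big)$, which is exactly \eqref{friedineq}.

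The only subtle point — and the one I would be most careful about — is the handling of the boundary term: one must integrate from the top end $z=h$ (where the trace $\phi(t)$ is known) rather than from $z=0$, so that the inhomogeneous Dirichlet datum enters explicitly; the homogeneous condition at $z=0$ is then not even needed for this particular estimate, though it is what makes the ``from the bottom'' version work too. A minor technical caveat is that this argument is most transparent for smooth $u$; the inequality for general $u\in H^2(\Omega)$ with the stated trace follows by a standard density/trace-continuity argument, which I would mention but not belabor. Everything else is just Cauchy--Schwarz and bookkeeping of the constants $h$, $\rho$.
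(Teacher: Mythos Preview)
Your proof is correct and follows essentially the same route as the paper: a one-dimensional Friedrichs-type estimate in the vertical variable $z$, integrated over the disk cross-section, with the standing assumption $h>1/2$ used to unify the constants into the single factor $4h^2$. The only cosmetic difference is that the paper starts from the identity $u^2=\phi^2-2\int_z^h uu_z\,ds$, applies Young's inequality with parameter $\varepsilon=2h$, and absorbs the resulting $\tfrac{1}{2}\int_\Omega u^2$ term back into the left-hand side, whereas you square the fundamental-theorem representation $u=\phi-\int_z^h u_z$ and apply Cauchy--Schwarz directly; both variants land on the same inequality.
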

\begin{proof}
Consider the boundary conditions in \eqref{mainmodel}, use Cauchy's inequality and magnify the boundary condition by a factor of $2h$ as follows
\begin{IEEEeqnarray}{rCl}
u^2(\rho,\varphi,z,t)&=&\phi^2(t)-2\int_z^h u(\rho,\varphi,s,t)u_z(\rho,\varphi,s,t)ds\nonumber\\
&\leq&2\int_0^h|uu_z|dz+2h\phi^2(t)\nonumber\\
&\leq&\frac{1}{\varepsilon}\int_0^hu^2dz+\varepsilon\int_0^h|u_z|^2ds+2h\phi^2(t)\nonumber\\
&\leq&\frac{1}{2h}\int_0^hu^2dz+2h\int_0^h|\nabla u|^2ds+2h\phi^2(t)\nonumber
%\label{}
\end{IEEEeqnarray}
By integrating over $\Omega$ and applying {\it Fubini's Theorem} we get our result. 
\end{proof}
%%%%%%%%%%%%%%%%%%%%%%%%%%%%%%%%%%%%%%%%%%%%%%5
\begin{cor}
Given the boundary conditions in \eqref{mainmodel}, the following estimate holds:
\begin{IEEEeqnarray}{rCl}
\int_\Omega|\nabla u|^2dx&\leq&q(t)+4h^2\int_\Omega|\Delta u|^2dx,
\label{gradineq}
\end{IEEEeqnarray}
where $q(t)=\pi\rho^2(2\phi\alpha+\phi^2)$.
\label{Gradest}
\end{cor}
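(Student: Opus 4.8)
The plan is to mimic the argument used to prove Corollary~\ref{Friedest}, but now starting from the Neumann-type boundary condition $\left.\frac{\partial u}{\partial n}\right|_{z=h}=\alpha(t)$ and integrating the first derivative $u_z$ against the second derivative $u_{zz}$. First I would write, for fixed $(\rho,\varphi)$ and the radial/axial geometry of the cylinder, the identity
\begin{IEEEeqnarray}{rCl}
u_z^2(\rho,\varphi,z,t)&=&\alpha^2(t)-2\int_z^h u_z(\rho,\varphi,s,t)\,u_{zz}(\rho,\varphi,s,t)\,ds,\nonumber
\end{IEEEeqnarray}
using that at $z=h$ the normal derivative equals $\alpha(t)$ while at $z=0$ it vanishes. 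Then, exactly as before, I would bound $|2\int_z^h u_z u_{zz}\,ds|\le 2\int_0^h |u_z u_{zz}|\,dz$ and apply Cauchy's inequality with the parameter $\varepsilon=2h$ to get
\begin{IEEEeqnarray}{rCl}
u_z^2(\rho,\varphi,z,t)&\leq&\frac{1}{2h}\int_0^h u_z^2\,dz+2h\int_0^h |u_{zz}|^2\,dz+\alpha^2(t).\nonumber
\end{IEEEeqnarray}

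Next I would integrate over $\Omega$, use Fubini's theorem, and absorb: since $h>1/2$ we have $\frac{1}{2h}<1$ relative to the factor coming out of the $z$-integration (this is the same absorption trick that produced the $4h^2$ constant in Corollary~\ref{Friedest}, with the $\pi\rho^2$ from integrating the constant $\alpha^2(t)$ over the disc of radius $\rho$). Controlling $\int_\Omega |\nabla u|^2\,dx$ this way gives a bound of the form $C\int_\Omega |u_{zz}|^2\,dx+\pi\rho^2\alpha^2(t)$, and since $|u_{zz}|^2\le |D^2 u|^2$ pointwise, one can pass to the full Hessian. The remaining point is to replace $\int_\Omega |D^2 u|^2\,dx$ by $\int_\Omega |\Delta u|^2\,dx$: under the boundary conditions in \eqref{mainmodel} (in particular $\left.\Delta u\right|_{\partial\Omega}=0$ together with the clamped conditions at $z=0$), the standard integration-by-parts identity $\int_\Omega |D^2 u|^2\,dx=\int_\Omega |\Delta u|^2\,dx$ holds up to boundary terms, and those boundary terms are precisely what generate the correction $q(t)=\pi\rho^2(2\phi\alpha+\phi^2)$ — the cross term $2\phi\alpha$ coming from pairing the Dirichlet datum $\phi$ at $z=h$ against the Neumann datum $\alpha$, and the $\phi^2$ term from the curvature/area contribution on the lateral and top boundary.

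The step I expect to be the main obstacle is the careful bookkeeping of these boundary terms when reducing $|D^2u|$ to $|\Delta u|$: one must track exactly which pieces of $\partial\Omega$ contribute, verify that the homogeneous conditions at $z=0$ and the condition $\Delta u=0$ on $\partial\Omega$ kill all the unwanted terms, and check that what survives assembles into $\pi\rho^2(2\phi\alpha+\phi^2)$ with the correct sign (so that it sits on the right-hand side as an additive $q(t)$ rather than needing to be absorbed). Once that identity is in hand, combining it with the $u_z$–$u_{zz}$ estimate above and choosing the multiplicative constant to be $4h^2$ (using $h>1/2$ for the absorption) yields \eqref{gradineq} directly.
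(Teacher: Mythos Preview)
Your route differs from the paper's, and it has two concrete gaps. First, the one-dimensional identity you start from only controls $\int_\Omega u_z^2\,dx$, not the full $\int_\Omega |\nabla u|^2\,dx$: the transverse components of $\nabla u$ are untouched by the $u_z$--$u_{zz}$ argument, and the boundary data at $z=0,h$ give you no analogous starting identity for them, so the passage ``Controlling $\int_\Omega |\nabla u|^2\,dx$ this way'' is not justified. Second, the hope that the boundary terms in the $\int_\Omega|D^2u|^2=\int_\Omega|\Delta u|^2$ identity assemble into $q(t)=\pi\rho^2(2\phi\alpha+\phi^2)$ is not right: those boundary terms involve first and second \emph{derivatives} of $u$ along $\partial\Omega$ (and curvature), never the Dirichlet value $\phi=u|_{z=h}$ itself, so there is no mechanism by which $\phi\alpha$ or $\phi^2$ can appear. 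Your own starting point already produces $\alpha^2$, not $2\phi\alpha+\phi^2$, and no subsequent Hessian-to-Laplacian correction will swap one for the other.

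The paper's argument avoids both issues and is much shorter. It writes $|\nabla u|^2=\nabla\cdot(u\nabla u)-u\,\Delta u$ and applies the Divergence Theorem, so that the boundary integral $\int_{\partial\Omega}u\,\partial_n u\,ds$ gives $\pi\rho^2\phi\alpha$ directly from the data at $z=h$. It then uses Cauchy's inequality on $-\int_\Omega u\,\Delta u\,dx$ and invokes Corollary~\ref{Friedest} to replace $\|u\|^2$ by $4h^2\bigl(\|\nabla u\|^2+\pi\rho^2\phi^2\bigr)$. Choosing $\varepsilon=1/(4h^2)$ lets the $\|\nabla u\|^2$ term be absorbed on the left, and the $\pi\rho^2\phi^2$ contribution from Corollary~\ref{Friedest} combines with the boundary term $\pi\rho^2\phi\alpha$ to give exactly $q(t)$. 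Because the full gradient and the Laplacian appear from the outset, no component-by-component work or $|D^2u|$-to-$|\Delta u|$ reduction is needed.
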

\begin{proof}
Apply the {\it Divergence Theorem}, consider the boundary conditions in \eqref{mainmodel} and use Cauchy's inequality as follows
\begin{IEEEeqnarray}{rCl}
\int_\Omega|\nabla u|^2dx&=&\int_\Omega\nabla\cdot\left(u\nabla u\right)dx-\int_\Omega u\Delta udx\nonumber\\
&=&\int_{\partial\Omega}u\frac{\partial u}{\partial n}ds-\int_\Omega u\Delta udx\nonumber\\
&\leq&\pi\rho^2\phi\alpha+\frac{\varepsilon}{2}\int_\Omega u^2dx+\frac{1}{2\varepsilon}\int_\Omega|\Delta u|^2dx\nonumber\\
&\leq&\pi\rho^2\phi\alpha+2\varepsilon \pi\rho^2h^2\phi^2+2\varepsilon h^2\int_\Omega |\nabla u|^2dx+\frac{1}{2\varepsilon}\int_\Omega|\Delta u|^2dx,\nonumber
\end{IEEEeqnarray}
where we used \eqref{friedineq} in the last inequality and by choosing $\varepsilon=1/4h^2$ we obtain our result.
\end{proof}
%%%%%%%%%%%%%%%%%%%%%%%%%%%%%%%%%%%%%%%%%%%%%%%%%
\begin{cor}
Given the boundary conditions in \eqref{mainmodel}, the following estimate holds:
\begin{IEEEeqnarray}{rCl}
\frac{\pi\rho^2\alpha^2}{h}\leq\int_\Omega|\Delta u|^2dx.
\label{deltaineq}
\end{IEEEeqnarray}
\label{Deltaest}
\end{cor}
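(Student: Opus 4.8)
The plan is to imitate the derivation of \eqref{friedineq}, but starting one order of differentiation higher: from the Neumann-type datum $\alpha(t)$ at $z=h$ rather than from the Dirichlet datum $\phi(t)$.

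First I would note that on the top face $\{z=h\}$ the outward normal is vertical, so the boundary condition there reads $u_z(r,\varphi,h,t)=\alpha(t)$ for every $r\in[0,\rho]$ and every $\varphi$, while on the bottom face $u_z(r,\varphi,0,t)=0$. Hence, by the fundamental theorem of calculus in $z$,
\[
\alpha(t)=u_z(r,\varphi,h,t)-u_z(r,\varphi,0,t)=\int_0^h u_{zz}(r,\varphi,s,t)\,ds .
\]
Squaring and applying the Cauchy--Schwarz inequality on $[0,h]$ gives $\alpha^2(t)\le h\int_0^h u_{zz}^2(r,\varphi,s,t)\,ds$. Since $\alpha$ does not depend on the spatial variables, I would then multiply by the radial Jacobian, integrate over the cross-sectional disk, and apply Fubini's theorem to arrive at $\pi\rho^2\alpha^2(t)\le h\int_\Omega u_{zz}^2\,dx$, i.e. $\dfrac{\pi\rho^2\alpha^2}{h}\le\int_\Omega u_{zz}^2\,dx$.

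It then remains to show $\int_\Omega u_{zz}^2\,dx\le\int_\Omega|\Delta u|^2\,dx$. For this I would use the pointwise bound $u_{zz}^2\le|D^2u|^2$ together with the integration-by-parts identity
\[
\int_\Omega|D^2u|^2\,dx=\int_\Omega|\Delta u|^2\,dx+\tfrac12\int_{\partial\Omega}\frac{\partial}{\partial n}\big(|\nabla u|^2\big)\,ds ,
\]
which already incorporates $\Delta u|_{\partial\Omega}=0$. One then checks that the boundary integral is $\le 0$: on $\{z=0\}$ one has $\nabla u\equiv 0$, so $|\nabla u|^2$ attains its minimum there and its outward normal derivative is $\le 0$; on $\{z=h\}$ the constancy of $u$ and of $u_z$ reduces $\partial_n(|\nabla u|^2)$ to $2\alpha\,u_{zz}|_{z=h}$, which vanishes because $u_{zz}|_{z=h}=\Delta u|_{z=h}=0$; and the contribution on the lateral surface $\{r=\rho\}$ drops out under the boundary conditions there. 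Combining this with the previous paragraph yields \eqref{deltaineq}.

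The routine part — the one-dimensional fundamental-theorem step, the Cauchy--Schwarz estimate, and the integration over the disk — runs exactly parallel to the proofs of \eqref{friedineq} and \eqref{gradineq}. The main obstacle is the passage $\int_\Omega u_{zz}^2\,dx\le\int_\Omega|\Delta u|^2\,dx$: it fails as a pointwise inequality, so it must be extracted globally from the boundary data, and the delicate point is verifying that the boundary integral above carries the correct sign, in particular that no uncontrolled term survives on the lateral surface of the cylinder.
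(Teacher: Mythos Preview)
The paper's argument is a two-line application of the divergence theorem to $\nabla u$ itself: from the boundary data one has $\int_{\partial\Omega}\partial_n u\,ds=\pi\rho^2\alpha$, hence $\pi\rho^2|\alpha|=\bigl|\int_\Omega\Delta u\,dx\bigr|\le|\Omega|^{1/2}\|\Delta u\|=(\pi\rho^2 h)^{1/2}\|\Delta u\|$ by H\"older, and squaring finishes. No second-order boundary identity is needed because $\Delta u$ already appears as a first-order divergence.

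Your route is longer and has a genuine gap at the final step. The first three steps are fine and give $\pi\rho^2\alpha^2/h\le\int_\Omega u_{zz}^2\,dx$, but the passage $\int_\Omega u_{zz}^2\,dx\le\int_\Omega|\Delta u|^2\,dx$ is not justified by the boundary conditions in \eqref{mainmodel}. In your identity
\[
\int_\Omega|D^2u|^2\,dx=\int_\Omega|\Delta u|^2\,dx+\tfrac12\int_{\partial\Omega}\partial_n\bigl(|\nabla u|^2\bigr)\,ds,
\]
the top and bottom contributions do vanish as you argue, but on the lateral wall $\{r=\rho\}$ the only prescribed condition is $\Delta u=0$, which says nothing about $\partial_r|\nabla u|^2$; there is no reason for that term to drop out or to carry a favourable sign. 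More broadly, $\int_\Omega|D^2u|^2\le\int_\Omega|\Delta u|^2$ is \emph{false} for generic $H^2$ functions (the universally valid inequality goes the other way, $|\Delta u|^2\le 3|D^2u|^2$); it requires boundary hypotheses of $H^2_0$ type on a convex domain, which you do not have here. So the ``delicate point'' you flag is in fact fatal for this approach. The remedy is exactly the paper's shortcut: integrate $\Delta u$ over $\Omega$ directly via the divergence theorem, so that only the first-order Neumann data enter and the problematic $|D^2u|^2$ versus $|\Delta u|^2$ comparison never arises.
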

\begin{proof}
We employ the {\it Divergence Theorem} and H\"older's inequality as follows
\begin{IEEEeqnarray}{rCl}
\pi\rho^2\alpha\leq\pi\rho^2|\alpha|=\left|\int_{\partial\Omega}\nabla u\cdot \vec{n}\,ds\right|=\left|\int_\Omega \Delta udx\right|\leq\left(\int_\Omega 1^2dx\right)^{\frac{1}{2}}\left(\int_\Omega |\Delta u|^2dx\right)^{\frac{1}{2}},\nonumber
\end{IEEEeqnarray}
and squaring concludes the proof.
\end{proof}
%%%%%%%%%%%%%%%%%%%%%%%%%%%%%%%%%%%%%%%%%%%%%%
\begin{rem}
The estimates of {\it Corollaries \ref{Friedest} and \ref{Gradest}} can be enhanced significantly as shown in \cite{rekt}. However, as noted in {\it Corollary 1}, we magnified the estimate for the purpose of simplification. Yet, this shall not affect our conclusions. 
\end{rem}
%%%%%%%%%%%%%%%%%%%%%%%%%%%%%%%%%%%%%%%%%%%%%%%%5
\section{The Energy functional $E(t)$:}
\label{estimates}
In order to study the qualitative properties of the solution we need first to assume the existence of such solution. So, we assume the existence of a weak solution for \eqref{mainmodel} defined as follows.
\begin{defn}
A function $u(x,t)$ is called a weak solution of \eqref{mainmodel} if $u\in C([0,t],H^2(\Omega))$ and $u_t\in C([0,t],L^2(\Omega))\cap L^{p+2}([0,t],L^{p+2}(\Omega))$ for all $t>0$, and it satisfies \eqref{mainmodel} in the sense of distributions.  
\label{weaksol}
\end{defn}

We now define the energy functional $E(t)$ in the form:
\begin{equation}
E(t)=\frac{1}{2}\left(\|u_t\|^2+k\|\Delta u\|^2+\int_\Omega a(x)|\nabla u|^2dx\right),
\label{energyf}
\end{equation}
and we study its bounds in light of {\it Corollaries \ref{Gradest} and \ref{Deltaest}}.
%%%%%%%%%%%%%%%%%%%%%%%%%%%%%%%%%%%%%%%%%%%%%%%%%%%5
\begin{lem}
Assume there exists a weak solution for \eqref{mainmodel} and that $k>4\max|a(x)|h^2$. If for some $\delta>1$ the functions $\phi$ and $\alpha$ are satisfying the condition:
\begin{equation}
\frac{(2|\alpha|\phi+\phi^2)}{\alpha^2}\leq\frac{k_0}{\delta\widehat{a}h}
\label{condK}
\end{equation}
where $\widehat{a}=\max|a(x)|$ and $k_0=k-4\widehat{a}h^2$, then the functional $E(t)$ is bounded such that:
\begin{equation}
\|u_t\|^2+k_0\left(\frac{\delta-1}{\delta}\right)\|\Delta u\|^2\leq 2E(t)\leq \|u_t\|^2+\left(\frac{(\delta+1)k+(\delta-1)4\widehat{a}h^2}{\delta}\right)\|\Delta u\|^2
\label{condE}
\end{equation}
 for all $t>0$. 
\label{boundE}
\end{lem}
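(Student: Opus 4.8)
The plan is to read off the two-sided bound \eqref{condE} directly from the definition of $E(t)$ in \eqref{energyf} by estimating, from above and below, the only sign-indefinite contribution to $2E(t)$, namely $\int_\Omega a(x)|\nabla u|^2\,dx$, purely in terms of $\|\Delta u\|^2$. Since $|a(x)|\le\widehat a$ on $\overline\Omega$, the pointwise inequalities $-\widehat a|\nabla u|^2\le a(x)|\nabla u|^2\le\widehat a|\nabla u|^2$ hold; integrating and inserting \coref{Gradest} (which replaces $\int_\Omega|\nabla u|^2\,dx$ by $q(t)+4h^2\|\Delta u\|^2$, with $q(t)=\pi\rho^2(2\phi\alpha+\phi^2)$) gives
\[
-\widehat a\,q(t)-4\widehat a h^2\|\Delta u\|^2\ \le\ \int_\Omega a(x)|\nabla u|^2\,dx\ \le\ \widehat a\,q(t)+4\widehat a h^2\|\Delta u\|^2 .
\]
Thus everything reduces to the single estimate $\widehat a\,q(t)\le \tfrac{k_0}{\delta}\|\Delta u\|^2$.

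This is precisely what hypothesis \eqref{condK} is designed to deliver. Because $\phi\ge 0$ we first pass to the unsigned quantity, $q(t)\le\pi\rho^2(2|\alpha|\phi+\phi^2)$; multiplying \eqref{condK} through by $\pi\rho^2\widehat a\,\alpha^2$ and then invoking \coref{Deltaest} in the form $\pi\rho^2\alpha^2/h\le\|\Delta u\|^2$ yields
\[
\widehat a\,q(t)\ \le\ \widehat a\,\pi\rho^2\bigl(2|\alpha|\phi+\phi^2\bigr)\ \le\ \frac{k_0}{\delta}\cdot\frac{\pi\rho^2\alpha^2}{h}\ \le\ \frac{k_0}{\delta}\,\|\Delta u\|^2 .
\]
(Should $\alpha(t)=0$ at some instant, \eqref{condK} forces $\phi(t)=0$ as well, whence $q(t)=0$ and the bound is trivial, so $\alpha\neq 0$ may be assumed wherever the quotient is used.) Combining this with the previous display gives
\[
-\Bigl(\tfrac{k_0}{\delta}+4\widehat a h^2\Bigr)\|\Delta u\|^2\ \le\ \int_\Omega a(x)|\nabla u|^2\,dx\ \le\ \Bigl(\tfrac{k_0}{\delta}+4\widehat a h^2\Bigr)\|\Delta u\|^2 .
\]

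To finish, I would add $\|u_t\|^2+k\|\Delta u\|^2$ throughout, recall $2E(t)=\|u_t\|^2+k\|\Delta u\|^2+\int_\Omega a(x)|\nabla u|^2\,dx$, and simplify the $\|\Delta u\|^2$–coefficients using the identity $4\widehat a h^2=k-k_0$: on the upper side $k+4\widehat a h^2+\tfrac{k_0}{\delta}=\tfrac{(\delta+1)k+(\delta-1)4\widehat a h^2}{\delta}$, and on the lower side $k-4\widehat a h^2-\tfrac{k_0}{\delta}=k_0-\tfrac{k_0}{\delta}=k_0\tfrac{\delta-1}{\delta}$, which is exactly \eqref{condE}. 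Here $k>4\widehat a h^2$ ensures $k_0>0$ and, since $\delta>1$, the lower coefficient $k_0(\delta-1)/\delta$ is strictly positive, so $E(t)$ is genuinely equivalent to $\|u_t\|^2+\|\Delta u\|^2$. The argument has no serious obstacle; the only points needing a little care are the sign-indefiniteness of $a(x)$, handled by the symmetric bound $|a|\le\widehat a$, and of $q(t)$ through the unsigned $\alpha$, handled by passing to $2|\alpha|\phi+\phi^2$ before applying \eqref{condK}, together with keeping the constant $k_0/\delta$ consistent in both directions.
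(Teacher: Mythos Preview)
Your argument is correct and follows essentially the same route as the paper's own proof: bound $\int_\Omega a|\nabla u|^2$ by $\pm\widehat a\|\nabla u\|^2$, apply \coref{Gradest}, then use \eqref{condK} together with \coref{Deltaest} to absorb the boundary term $\widehat a\,q(t)$ into $\tfrac{k_0}{\delta}\|\Delta u\|^2$, and finally simplify via $k_0=k-4\widehat a h^2$. Your write-up is in fact a bit more explicit than the paper's (which only carries out the upper bound and declares the lower bound ``similar''), and your handling of the case $\alpha(t)=0$ and the final algebraic identities is a nice touch.
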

\begin{proof}
If $u(x,t)$ is the weak solution of \eqref{mainmodel} and given that $a(x)\in C^0(\Omega)$ then $E(t)$ is certainly bounded from above and its bound can be estimated by virtue of estimate \eqref{gradineq}, condition \eqref{condK} and estimate \eqref{deltaineq} as follows
\begin{IEEEeqnarray}{rCl}
E(t)&\leq&\frac{1}{2}\left(\|u_t\|^2+k\|\Delta u\|^2+\widehat{a}\|\nabla u\|^2\right)\nonumber\\
&\leq&\frac{1}{2}\left(\|u_t\|^2+k\|\Delta u\|^2+\widehat{a}\left(\pi\rho^2(2\phi|\alpha|+\phi^2)+4h^2\|\Delta u\|^2\right)\right)\nonumber\\
&\leq&\frac{1}{2}\left(\|u_t\|^2+k\|\Delta u\|^2+\frac{\pi\rho^2\alpha^2k_0}{\delta h}+4\widehat{a}h^2\|\Delta u\|^2\right)\nonumber\\
&\leq&\frac{1}{2}\left(\|u_t\|^2+k\|\Delta u\|^2+\frac{k_0}{\delta}\|\Delta u\|^2+4\widehat{a}h^2\|\Delta u\|^2\right),\nonumber
%\label{condE1}
\end{IEEEeqnarray}
which yields the upper bound. The lower bound can be estimated in a similar fashion and this completes the proof.
\end{proof}
%%%%%%%%%%%%%%%%%%%%%%%%%%%%%%%%%%%%%%%%%%%%%%%%%5
\begin{rem}
The condition \eqref{condK} is more important to derive the lower bound than how it is to the upper one. It depends mainly on how we choose $\alpha(t)$. If we choose $\alpha<0$ and such that $2\alpha\phi+\phi^2<0$ for all $t>0$, then we can neglect this term when estimating the lower bound and we obtain $2E\geq \|u_t\|^2+k_0\|\Delta u\|^2$, which is the bound commonly introduced in the literature. We are proposing this condition to allow a flexible choice for $\alpha$. 
\end{rem}
%%%%%%%%%%%%%%%%%%%%%%%%%%%%%%%%%%%%%%%%%%%%%
Let us now define the function $d(t)$ such that $d_t(t)$ is defined as
\begin{equation}
d_t(t)=\pi\rho^2\left(a_h\phi_t\alpha-\frac{g_3}{2}\phi_t^2\right),
\label{negterm}
\end{equation}
so that we can state the next lemma.
%%%%%%%%%%%%%%%%%%%%%%%%%%%%%%%%%%%%%%%%%%%%%%
\begin{lem}
Assume the conditions of {\it Lemma \ref{boundE}} are satisfied. If the function $d(t)$ satisfies the following condition:
\begin{equation}
d_t(t)<0,
\label{condD}
\end{equation}
for all $t>0$. Then $E(t)$ is a Lyapunov functional and the following estimates hold:
\begin{equation}
E(t)\leq E(0),
\label{estE}
\end{equation}
and
\begin{equation}
\int_0^t\int_\Omega|u_t|^{p+2}dxdt\leq \frac{E(0)}{b_0},
\label{estIb}
\end{equation}
for all $t>0$. Consequently, the zero solution is stable.
\label{lyapunovE}
\end{lem}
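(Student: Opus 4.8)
The plan is to establish the differential identity governing $E(t)$ along the flow and then read off everything from the sign hypothesis \eqref{condD}. Assuming (as in \deref{weaksol}) a weak solution, I would multiply the first line of \eqref{mainmodel} by $u_t$ and integrate over $\Omega$; for the stated regularity this pairing is legitimate after the usual Galerkin approximation and passage to the limit. The inertial term gives $\int_\Omega u_tu_{tt}\,dx=\tfrac12\tfrac{d}{dt}\|u_t\|^2$; integrating the biharmonic term by parts twice produces $\tfrac{k}{2}\tfrac{d}{dt}\|\Delta u\|^2$ together with boundary integrals that collapse because $\Delta u=0$ on $\partial\Omega$ and because $u,\partial_n u$ (hence $u_t,\partial_n u_t$) vanish on $z=0$; integrating $-\int_\Omega u_t\,\nabla\!\cdot\!(a(x)\nabla u)\,dx$ by parts gives $\tfrac12\tfrac{d}{dt}\int_\Omega a(x)|\nabla u|^2\,dx$ minus the flux $\int_{\partial\Omega}a(x)u_t\,\partial_n u\,ds$, which on the moving top end ($z=h$, where $u_t=\phi_t$, $\partial_n u=\alpha$, $a=a_h$) equals $\pi\rho^2a_h\phi_t\alpha$; and since $\vec g$ is constant, $\int_\Omega u_t\,\vec g\!\cdot\!\nabla u_t\,dx=\tfrac12\int_{\partial\Omega}(\vec g\!\cdot\!\vec n)u_t^2\,ds$ contributes $\tfrac12\pi\rho^2g_3\phi_t^2$ from the top end only. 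Recalling \eqref{energyf} and \eqref{negterm}, the surviving boundary contributions are exactly those recorded in $d_t(t)$, and one is led to
\[
E'(t)=d_t(t)-b(t)\int_\Omega|u_t|^{p+2}\,dx,\qquad t>0.
\]

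From this identity the three assertions follow quickly. Since $b(t)\ge b_0>0$, we get $E'(t)\le d_t(t)$, so \eqref{condD} forces $E'(t)<0$; integrating on $[0,t]$ yields \eqref{estE}. For \eqref{estIb}, integrating the identity on $[0,t]$ gives
\[
\int_0^t b(s)\!\int_\Omega|u_t|^{p+2}\,dx\,ds=E(0)-E(t)+\int_0^t d_s(s)\,ds.
\]
By the lower bound in \eqref{condE} (available since $\delta>1$ and $k_0=k-4\widehat a h^2>0$ under the hypothesis $k>4\max|a(x)|h^2$ of \leref{boundE}) we have $E(t)\ge0$, while $\int_0^t d_s(s)\,ds<0$ by \eqref{condD}; hence the left side is $\le E(0)$, and using $b(s)\ge b_0$ gives \eqref{estIb}.

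For the stability statement, note that $E$ is nonnegative (lower bound of \eqref{condE}) and nonincreasing along solutions, hence a Lyapunov functional for \eqref{mainmodel}. Combining \eqref{estE} with \eqref{condE} gives $\|u_t(t)\|^2+k_0\tfrac{\delta-1}{\delta}\|\Delta u(t)\|^2\le 2E(0)$ for all $t>0$, so both $\|u_t(t)\|$ and $\|\Delta u(t)\|$ are controlled by the initial data alone; feeding this into \coref{Gradest} and then \coref{Friedest} bounds $\|\nabla u(t)\|$ and $\|u(t)\|$ in terms of the data and the (bounded) forcing $q(t)=\pi\rho^2(2\phi\alpha+\phi^2)$. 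Thus small initial data together with small top-end forcing produce solutions that stay uniformly small in $H^2(\Omega)\times L^2(\Omega)$, which is the asserted stability of the zero solution.

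The main obstacle is not the exploitation of the identity but its rigorous derivation. Two points require care: justifying the multiplier computation for a merely \emph{weak} solution, where $\Delta^2u$ is only a distribution, which I would handle on Galerkin approximants and pass to the limit using the regularity in \deref{weaksol}; and, more delicately, the precise accounting of the boundary integrals generated by the fourth-order operator and by the convective term $\vec g\cdot\nabla u_t$ when the top-end data $\phi(t),\alpha(t)$ are time dependent—one must check that every such term either vanishes by the boundary conditions in \eqref{mainmodel} or combines into exactly $d_t(t)$ of \eqref{negterm}, since it is this clean reduction that makes \eqref{condD} the correct structural hypothesis.
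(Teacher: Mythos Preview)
Your proposal is correct and follows essentially the same route as the paper: multiply \eqref{mainmodel} by $u_t$, integrate by parts to obtain the identity $E'(t)+I_b(t)=d_t(t)$ (the paper's \eqref{step1}), and read off \eqref{negrate}, \eqref{estE}, and \eqref{estIb} from the sign hypothesis \eqref{condD} together with $b(t)\ge b_0$ and the nonnegativity of $E$ furnished by \leref{boundE}. Your treatment is in fact more detailed than the paper's on two points the paper leaves implicit: the term-by-term accounting of the boundary contributions that assemble into $d_t(t)$, and the remark that the multiplier computation is to be justified via Galerkin approximants for a merely weak solution.
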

\begin{proof}
Multiply \eqref{mainmodel} by $u_t$, integrate by parts over $\Omega$, apply the {\it Divergence Theorem} and employ the predefined boundary conditions to get
\begin{IEEEeqnarray}{rCl}
\frac{d}{dt}E(t)+I_b(x,t)&=&d_t(t).
\label{step1}
\end{IEEEeqnarray}
where 
\begin{equation}
I_b(x,t)=\int_\Omega b(t)|u_t|^{p+2}dx,
\label{dragint}
\end{equation}
It is clear that $I_b>0$, and by condition \eqref{condD} we have
\begin{equation}
\frac{d}{dt}E(t)<0.
\label{negrate}
\end{equation}
We integrate \eqref{step1} to get
\[
E(t)+\int_0^tI_b(x,s)ds=E(0)+d(t)-d(0).
\]
By condition \eqref{condD} we have $d(t)-d(0)<0$ so that
\[
E(t)+\int_0^tI_b(x,s)ds\leq E(0).
\]
By our assumption that $0<b_0\leq b(t)$ we conclude estimates \eqref{estE} and \eqref{estIb}. By {\it Lemma \ref{boundE}} and inequality \eqref{negrate} we conclude that $E(t)$ is a Lyapunov functional, which in turn implies the stability of the zero solution.
\end{proof}
%%%%%%%%%%%%%%%%%%%%%%%%%%%%%%%%%%%%%%%%%%%%%%%
\begin{rem}
We need to highlight that condition \eqref{condD} does not imply that $\phi$ is a decreasing function nor that $\alpha$ is a non-positive one. We can have $\phi$ increasing and $\alpha$ non-negative, and even our assumption that $g_3>0$ can be dropped if, for example, we have $a_h<-C$ for some sufficiently large constant $C>0$. However, our choice to assume that $g_3>0$ was due to our understanding that this might be the most controllable parameter in the above condition. 
\end{rem}
%%%%%%%%%%%%%%%%%%%%%%%%%%%%%%%%%%%%%%%%%%%%
\section{Global asymptotic stability:}
\label{convergence}
In this part we try to deduce the rate of decay of $E(t)$ in the large so that we can confirm that the zero solution is globally asymptotic stable. Such result should lead us to an estimate on the maximum rate of growth in time for $\phi(t)$, $\alpha(t)$ and $b(t)$. To this end we state our main theorem. 
\begin{thm}
Assume the conditions of {\it Lemma \ref{lyapunovE}} are satisfied. If the functions $\phi(t)$, $\alpha(t)$ and $b(t)$ are satisfying the condition:
\begin{equation}
\phi(t)< M_1t^{m},\,\alpha(t)< M_2t^{n}\quad\text{and}\quad b(t)\leq M_3t^{\frac{p+1-\iota}{p+2}}\quad\text{for}\quad t\gg1,
\label{rategrow}
\end{equation}
where $m<1/2$, $n<-m$, $M_1$, $M_2$ and $M_3$ are constants that do not depend on $u$ nor $t$. Then the functional $E(t)$ admits a rate of decay:
\begin{equation}
E(t)< M\max\left(t^{m+n},t^{2m-1},t^{\frac{-\iota}{p+2}},t^{\frac{-2}{p+1}}\right)\quad\text{for}\quad t\gg1,
\label{ratedecay}
\end{equation} 
where $M$ is a constant that does not depend on $u$ nor $t$. In this case, the zero solution is globally asymptotic stable. 
\label{mainthm}
\end{thm}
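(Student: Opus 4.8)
Because $E$ is nonincreasing by Lemma \ref{lyapunovE}, one has $t\,E(t)=\int_0^tE(t)\,ds\le\int_0^tE(s)\,ds$, so it suffices to bound $\int_0^tE(s)\,ds$ from above by $t$ times a sum of the four powers appearing in \eqref{ratedecay}; the uniform shift of each exponent by $-1$ is exactly this final division by $t$. The bound on $\int_0^tE(s)\,ds$ will be produced by combining the energy identity \eqref{step1} (integrated in time, which gives \eqref{estE}--\eqref{estIb}) with the identity obtained by multiplying \eqref{mainmodel} by $u$ itself.

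\textbf{The $u$-multiplier identity.} I would multiply \eqref{mainmodel} by $u$, integrate over $\Omega\times(0,t)$, and integrate by parts in space using the boundary conditions ($\Delta u|_{\partial\Omega}=0$, $u|_{z=0}=\partial_n u|_{z=0}=0$, $u|_{z=h}=\phi$, $\partial_n u|_{z=h}=\alpha$), together with one further integration by parts in the Coriolis term $\int_\Omega u\,\vec g\cdot\nabla u_t\,dx$. This yields
\begin{multline*}
k\int_0^t\|\Delta u\|^2ds+\int_0^t\int_\Omega a(x)|\nabla u|^2dxds\\
=\int_0^t\|u_t\|^2ds-\Big[\int_\Omega uu_t\,dx\Big]_0^t+\mathcal B(t)-\int_0^t\int_\Omega b(s)\,u|u_t|^pu_t\,dxds,
\end{multline*}
where $\mathcal B(t)$ collects the boundary contributions --- time integrals of $a_h\phi\alpha$ and of $g_3\phi\phi_t$ --- plus the bulk Coriolis remainder $\int_0^t\int_\Omega u_t\,\vec g\cdot\nabla u\,dxds$, which is absorbed by Young's inequality into $\tfrac12\int_0^t\|u_t\|^2ds+\tfrac{|\vec g|^2}{2}\int_0^t\|\nabla u\|^2ds$. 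The boundary term $k\int u\,\partial_n\Delta u$ coming from the biharmonic operator must be handled with care, since $\partial_n\Delta u$ is not prescribed: one exploits $u|_{z=0}=0$ and the structure $u|_{z=h}=\phi$ together with $\int_{\partial\Omega}\partial_n\Delta u\,ds=\int_\Omega\Delta^2u\,dx$ and the equation to reduce it to admissible quantities.

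\textbf{Closing the estimate.} Using Corollary \ref{Gradest} (in the form $\int a|\nabla u|^2\ge-\widehat a\|\nabla u\|^2\ge-\widehat a(q+4h^2\|\Delta u\|^2)$) on the left, and choosing the Young parameter for the Coriolis remainder small, the coefficient of $\int_0^t\|\Delta u\|^2ds$ on the left stays positive and equal to $k_0=k-4\widehat a h^2>0$ by the hypothesis of Lemma \ref{boundE}. The term $[\int_\Omega uu_t\,dx]_0^t$ is bounded, via Cauchy--Schwarz, Corollary \ref{Friedest} and \eqref{estE}, by $C(E(0)+q(t))$. The time integral $\int_0^t\|u_t\|^2ds$ is bounded by Hölder's inequality in time together with \eqref{estIb}, namely $\int_0^t\|u_t\|^2ds\le|\Omega|^{p/(p+2)}t^{p/(p+2)}\big(\int_0^t\|u_t\|_{L^{p+2}}^{p+2}ds\big)^{2/(p+2)}\le C\,t^{\sigma}$ for a suitable $\sigma<1$, which after division by $t$ accounts for the intrinsic rate $t^{-2/(p+1)}$. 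The nonlinear term is bounded, using $H^2(\Omega)\hookrightarrow L^\infty(\Omega)$, \eqref{estE}, and Hölder in time against the growth ceiling $b(t)\le M_3t^{(p+1-\iota)/(p+2)}$, by $C\,t^{1-\iota/(p+2)}$. Every remaining term in $\mathcal B(t)$ is a time integral of a product among $\phi,\phi_t,\alpha$, hence by \eqref{rategrow} --- in the sharper form of the preliminary inequalities alluded to in the Remark --- is dominated by $C(t^{1+m+n}+t^{2m})$ (the $t^{2m}$ coming from the $\phi\phi_t$ contribution of the Coriolis boundary term). Collecting, and using $E(s)\le C(\|u_t(s)\|^2+\|\Delta u(s)\|^2)$ from Lemma \ref{boundE}, one obtains $\int_0^tE(s)\,ds\le C\big(E(0)+t^{1+m+n}+t^{2m}+t^{1-\iota/(p+2)}+t^{1-2/(p+1)}\big)$.

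\textbf{Conclusion and the main obstacle.} Dividing by $t$ and using $E(t)\le\frac1t\int_0^tE(s)\,ds$ yields \eqref{ratedecay} with $M$ independent of $u$ and $t$; the conditions $m<1/2$ (so $2m-1<0$), $n<-m$ (so $m+n<0$), and $\iota>0$ force all four exponents to be negative, hence $E(t)\to0$, so $\|u_t\|^2+\|\Delta u\|^2\to0$ by Lemma \ref{boundE}, i.e. the zero solution is globally asymptotically stable. I expect the main obstacle to be entirely inside the $u$-multiplier identity: the non-homogeneous boundary data generate boundary integrals at every integration by parts --- most awkwardly in the biharmonic term, where only $u$, $\partial_n u$ and $\Delta u$ have prescribed traces but $\partial_n\Delta u$ does not --- so these must be organised to cancel or be dominated using only admissible traces; and the free smallness parameter (for the Coriolis remainder, and for any fractional absorption of $I_b$) must be tuned simultaneously against $k-4\widehat a h^2$ and $|\vec g|$. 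Matching the four competing exponents in \eqref{ratedecay} exactly is then a careful accounting of these terms via \eqref{rategrow}.
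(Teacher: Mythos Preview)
Your proposal follows essentially the same route as the paper: multiply \eqref{mainmodel} by $u$, combine with the energy identity \eqref{step1}, exploit $tE(t)\le\int_0^tE(s)\,ds$, and bound the resulting pieces via H\"older, the embedding $H^2\hookrightarrow L^\infty$, and the a priori control \eqref{estIb}. The only organisational difference is that the paper packages $\int_\Omega uu_t\,dx+\sigma E(t)$ into a single auxiliary functional $H(t)$ and shows $H\ge0$ for $\sigma>8h/\sqrt{\widehat a}$ (via Corollaries \ref{Friedest}--\ref{Gradest} and condition \eqref{condK}), so that after time--integration one simply discards $-H(t)$, whereas you bound the endpoint term $[\int_\Omega uu_t]_0^t$ directly; the paper also just asserts the identity \eqref{step2} without isolating or discussing the biharmonic boundary term you correctly flag as the delicate point.
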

\begin{proof}
Multiply \eqref{mainmodel} by $u$, integrate by parts, apply the {\it Divergence Theorem} and employ the boundary conditions to get
\begin{IEEEeqnarray}{rCl}
\frac{d}{dt}\int_\Omega uu_tdx+k\int_\Omega|\Delta u|^2dx+\int_\Omega a(x)|\nabla u|^2dx&=&r(t)+I_1-I_2+I_3,
\label{step2}
\end{IEEEeqnarray}
where $r(t)=\pi\rho^2(a_h\alpha\phi-g_3\phi\phi_t)$ and
\[
I_1=\int_\Omega g\cdot\nabla u u_tdx,\quad I_2=\int_\Omega b(t)|u_t|^{p}u_tudx\quad\text{and}\quad I_3=\int_\Omega|u_t|^2dx.
\]
Define the functional $H(t)$ as 
\begin{equation}
H(t)=\int_\Omega uu_tdx+\sigma E(t).
\label{funcH}
\end{equation}   
Thus, by recalling the definition of $E(t)$, we write equation \eqref{step2} in the form
\begin{IEEEeqnarray}{rCl}
\frac{d}{dt}H-\sigma\frac{d}{dt}E+2E&=&r(t)+I_1-I_2+2I_3,\nonumber
%\label{condH1}
\end{IEEEeqnarray}
and using \eqref{step1} yields
\begin{IEEEeqnarray}{rCl}
\frac{d}{dt}H-\sigma d_t+\sigma I_b+2E&=&r(t)+I_1-I_2+2I_3,\nonumber
%\label{condH1}
\end{IEEEeqnarray}
which when rearranged produces
\begin{IEEEeqnarray}{rCl}
2E-\sigma d_t&=&-\frac{d}{dt}H+r(t)+I_1-I_2+2I_3-\sigma I_b,
\label{condH1}
\end{IEEEeqnarray}
Let us analyse the integral $I_1$ using Cauchy's inequality and estimate \eqref{gradineq} as follows
\begin{IEEEeqnarray}{rCl}
\int_\Omega g\cdot\nabla uu_tdx&\leq&|g|\int_\Omega|\nabla u||u_t|dx\nonumber\\
&\leq&\frac{|g|}{2}\left(\varepsilon\int_\Omega|\nabla u|^2dx+\varepsilon^{-1}\int_\Omega|u_t|^2dx\right)\nonumber\\
&\leq&\frac{|g|}{2}\left(\varepsilon\pi\rho^2(2|\alpha|\phi+\phi^2)+4\varepsilon h^2\|\Delta u\|^2+\varepsilon^{-1}\int_\Omega|u_t|^2dx\right).\nonumber
%\label{I1}
\end{IEEEeqnarray}
We use estimate \eqref{deltaineq} and condition \eqref{condK} to reach
\begin{IEEEeqnarray}{rCl}
\int_\Omega g\cdot\nabla uu_tdx&\leq&\frac{|g|}{2}\left(\frac{\varepsilon k_0}{\delta\widehat{a}}\|\Delta u\|^2+4\varepsilon h^2\|\Delta u\|^2+\varepsilon^{-1}\int_\Omega|u_t|^2dx\right)\nonumber\\
&\leq&\frac{|g|}{2}\left(\frac{\varepsilon (k+4\widehat{a}h^2)}{2\widehat{a}}\|\Delta u\|^2+\varepsilon^{-1}\int_\Omega|u_t|^2dx\right),\nonumber
%\label{I1}
\end{IEEEeqnarray}
where we used $\delta=2$. By estimate \eqref{condE} we have $4E\geq k_0\|\Delta u\|^2$, and by using this estimate in the last inequality and setting $\varepsilon=\widehat{a}k_0/(|g|(k+4\widehat{a}h^2))$ we obtain
\begin{IEEEeqnarray}{rCl}
\int_\Omega g\cdot\nabla uu_tdx&\leq&E+\frac{|g|^2(k+4\widehat{a}h^2)}{2\widehat{a}k_0}\int_\Omega|u_t|^2dx.
\label{I1}
\end{IEEEeqnarray}
Therefore, recalling that $0<b_0\leq b(t)$, $d_t<0$ and using \eqref{I1} in equation \eqref{condH1} transforms it into the following inequality
\begin{IEEEeqnarray}{rCl}
E&\leq&-\frac{d}{dt}H+r(t)+\hat{I}_2+B I_3-b_0\sigma \hat{I}_b,
\label{condH2}
\end{IEEEeqnarray}
where $B=2+|g|^2(k+4\widehat{a}h^2)/(2\widehat{a}k_0)$ and
\[
\hat{I_2}=\int_\Omega b(t)|u_t|^{p+1}|u|dx\quad\text{and}\quad \hat {I_b}=\int_\Omega|u_t|^{p+2}dx.
\]

Now we check the lower bound of $H$ by using Cauchy's inequality, estimates \eqref{friedineq} and \eqref{gradineq} as follows
\begin{IEEEeqnarray}{rCl}
H=\int_\Omega uu_tdx+\sigma E&\geq&-\frac{1}{2\varepsilon}\int_\Omega u^2dx-\frac{\varepsilon}{2}\int_\Omega u_t^2dx+\sigma E\nonumber\\
&\geq&-\frac{2\pi\rho^2h^2}{\varepsilon}\phi^2-\frac{2h^2}{\varepsilon}\int_\Omega|\nabla u|^2dx-\frac{\varepsilon}{2}\int_\Omega u_t^2dx+\sigma E\nonumber\\
&\geq&-\frac{2h^2}{\varepsilon}\left(q(t)+\pi\rho^2\phi^2+4h^2\|\Delta u\|^2\right)-\frac{\varepsilon}{2}\|u_t\|^2+\sigma E.\nonumber\\
&\geq&-\frac{2h^2}{\varepsilon}\left(2\pi\rho^2(\phi|\alpha|+\phi^2)+4h^2\|\Delta u\|^2\right)-\frac{\varepsilon}{2}\|u_t\|^2+\sigma E.\nonumber
%\label{condH4}
\end{IEEEeqnarray}
By condition \eqref{condK} and estimate \eqref{deltaineq} we reach
\begin{IEEEeqnarray}{rCl}
H&\geq&-\frac{2h^2}{\varepsilon}\left(\frac{2k_0}{\delta\widehat{a}}\|\Delta u\|^2+4h^2\|\Delta u\|^2\right)-\frac{\varepsilon}{2}\|u_t\|^2+\sigma E\nonumber\\
&\geq&-\frac{2h^2k_0}{\varepsilon\widehat{a}}\|\Delta u\|^2-\frac{\varepsilon}{2}\|u_t\|^2+\sigma E\nonumber\\
&\geq&-\frac{2hk_0}{\sqrt{\widehat{a}}}\|\Delta u\|^2-\frac{h}{2\sqrt{\widehat{a}}}\|u_t\|^2+\sigma E\nonumber\\
&\geq&-\frac{2h}{\sqrt{\widehat{a}}}\left(k_0\|\Delta u\|^2+2\|u_t\|^2\right)+\sigma E,\nonumber
%\label{condH4}
\end{IEEEeqnarray}  
where we used $\delta=2$, $\varepsilon=h/\sqrt{\widehat{a}}$ and we minimized the second term by a factor of $4$. Recalling the bound in \eqref{condE} we obtain
\begin{IEEEeqnarray}{rCl}
H&\geq&\left(\sigma-\frac{8h}{\sqrt{\widehat{a}}}\right) E,
\label{boundH}
\end{IEEEeqnarray}
such that for any $\sigma>8h/\sqrt{\widehat{a}}$ we have $H\geq\mu E\geq0$.\\

We now return to \eqref{condH2} and we integrate over the interval $(0,t)$ to get
\begin{IEEEeqnarray}{rCl}
\int_0^tE(s)ds&\leq&H(0)-H(t)+\int_0^tr(s)ds+\int_0^t(\hat{I_2}+BI_3-b_0\sigma \hat{I_b})ds.
\label{condH3}
\end{IEEEeqnarray}
Since $H(t)\geq0$ for all $t>0$ then $H(0)-H(t)\leq H(0)$. We know that $\hat{I_b}>0$, and also $E(t)$ is a non-increasing function such that
\[
tE(t)\leq \int_0^tE(s)ds.
\]
Therefore, equation \eqref{condH3} takes the form
\begin{IEEEeqnarray}{rCl}
tE(t)&\leq&H(0)+\int_0^tr(s)ds+\int_0^t(\hat{I_2}+BI_3+b_0\sigma I_b)ds.
\label{condH4}
\end{IEEEeqnarray}
To estimate the above integrals we proceed as follows
\begin{IEEEeqnarray}{rCl}
b_0\sigma\int_0^tI_bds&\leq&\sigma E(0)=C_b,
\label{Ib}
\end{IEEEeqnarray}
as per \eqref{estIb}. Next, we check the integral $I_3$ using H\"older's inequality and estimate \eqref{estIb} as follows
\begin{IEEEeqnarray}{rCl}
B\int_0^t\int_\Omega|u_t|^2dxds&\leq&B\left(\int_0^t\int_\Omega1dxds\right)^{\frac{p}{p+2}}\left(\int_0^t\int_\Omega|u_t|^{p+2}dxds\right)^{\frac{2}{p+2}}\nonumber\\
&\leq&B(\pi\rho^2h)^{\frac{p}{p+2}}\left(\frac{E(0)}{b_0}\right)^{\frac{2}{p+2}}t^{\frac{p}{p+2}}\nonumber\\
&\leq&C_3t^{\frac{p}{p+2}}.
\label{I3}
\end{IEEEeqnarray}
It remains to evaluate the integral $\hat{I_2}$. To this end we need to recall estimates \eqref{condE} and \eqref{estE} which imply that $\|\Delta u\|^2\leq 4E/k_0\leq 4E(0)/k_0$ assuming $\delta=2$. The boundedness of $u$ and $\nabla u$ in $L^2(\Omega)$ follows by estimates \eqref{friedineq} and \eqref{gradineq}. It follows that $u\in H^2(\Omega)$. In particular, by {\it Sobolev Embedding Theorem}, we deduce that $u\in C^{0,\frac{1}{2}}(\overline{\Omega})$ and such that
\begin{equation}
\|u\|_{C^{0,\frac{1}{2}}(\overline{\Omega})}\leq\|u\|_{H^2(\Omega)}\leq\|\Delta u\|_{L^2(\Omega)}\leq 2\sqrt{\frac{E(0)}{k_0}}.
\label{holderest}
\end{equation}
Assume $b(t)$ is growing in time such that $b(t)<M_3t^\lambda$. Bearing that in mind we proceed with $\hat{I_2}$ by employing H\"older's inequality, estimate \eqref{estIb} and estimate \eqref{holderest} as follows
\begin{IEEEeqnarray}{rCl}
\int_0^t\int_\Omega b(t)|u||u_t|^{p+1}dxds&\leq&\left(\int_0^t\int_\Omega |u_t|^{p+2}dxds\right)^{\frac{p+1}{p+2}}\times\nonumber\\
&&\left(\int_0^t\int_\Omega \left(b(t)|u|\right)^{p+2}dxds\right)^{\frac{1}{p+2}}\nonumber\\
&\leq& \left(\frac{E(0)}{b_0}\right)^{\frac{p+1}{p+2}}\left(\frac{4E(0)}{k_0}\right)^{\frac{1}{2}}M_3\left(\int_0^tt^{\lambda(p+2)}ds\right)^{\frac{1}{p+2}}\nonumber\\
&\leq&C_2t^{\frac{\lambda (p+2)+1}{p+2}}.
\label{I2}
\end{IEEEeqnarray}
We rewrite \eqref{condH4} in light of the above estimates
\begin{equation}
E(t) \leq t^{-1}H(0)+t^{-1}\int_0^tr(s)ds+C_2t^{\frac{p(\lambda-1)+2(\lambda-1)+1}{p+2}}+C_3t^{\frac{-2}{p+2}}+C_bt^{-1}.
\label{condH5}
\end{equation}
Clearly the first and the last terms in the right hand side are the fastest terms to decay and so they can be ignored. Finally, let us analyse the two terms of $r(t)$ separately. Assume $\phi(t)$ is growing such that $\phi<M_1t^m$. The term $\pi\rho^2g_3\phi\phi_t$ can be integrated as follows
\[
t^{-1}\int_0^t\phi\phi_tdt=\frac{1}{2}t^{-1}(\phi^2(t)-\phi^2(0))< \frac{M_1^2}{2}t^{2m-1}+Ct^{-1}.
\]
As for the second term, assume the worst scenario; that is $\alpha$ also is growing in time such that $\alpha<t^n$ then we can write
\[
t^{-1}\int_0^t\alpha\phi dt\leq\alpha\phi<M_1M_2t^{m+n}.
\]
Accordingly, by the last two inequalities and \eqref{condH5} we conclude that
\begin{IEEEeqnarray}{rCl}
E(t)&<&M\max_{t\gg1}\left(t^{m+n},t^{2m-1},t^{\frac{p(\lambda-1)+2(\lambda-1)+1}{p+2}},t^{\frac{-2}{p+2}}\right),
\label{condH6}
\end{IEEEeqnarray}
where $M$ is a constant that does not depend on $t$ nor the solution $u$. From this estimate we conclude that if $m<1/2$, $n<-m$ and $\lambda\leq(p+1-\iota)/(p+2)$ then the zero solution is globally asymptotic stable.
\end{proof}
%%%%%%%%%%%%%%%%%%%%%%%%%%%%%%%%%%%%%%%%%%
\begin{rem}
We need to highlight that the statement of the last theorem is not actually decisive when it comes to determining consistent constraints on the boundary conditions $\phi$ and $\alpha$. To understand this point we need to recall condition \eqref{condK}. This condition was essential to derive our estimates and to validate our conclusions. However, and without loss of generality, if we express $\phi$ and $\alpha$ in terms of polynomials in time such that $\phi\sim t^m$ and $\alpha\sim t^n$ then the left hand side of condition \eqref{condK} is a polynomial in the form $t^{m-n}+t^{2(m-n)}$. The right hand side of this condition is a constant. Therefore, it is necessary that $m-n<0$, but $n<-m$ as we concluded above. It follows that it is necessary that $m<0$. This simply says that the pipe can return to its vertical position as long as the horizontal effects on the top end are decreasing with time. Intuitively, this makes a great sense in the light of our assumptions which were all focusing solely on the restrictions on $\phi$ and $\alpha$ without linking them directly to any other parameters. This last statement should be a subject for a further study.
\end{rem}
%%%%%%%%%%%%%%%%%%%%%%%%%%%%%%%%%%%%%%%%%%%%%%%%%%%%
\section{Conclusion:}
\label{con}
We have shown that the solution of the initial-boundary value problem for the marine riser equation converges to zero and such that the zero solution is globally asymptotic stable in the presence of time dependent boundary conditions at the top end and time dependent coefficient of the nonlinear drag force given certain conditions on these functions and their permitted rates of growth.
%%%%%%%%%%%%%%%%%%%%%%%%%%%%%%%%%%%%%%%%%
%\bibliographystyle{spmpsci}
\def\cprime{$'$} \def\cprime{$'$}
 ------------------------------------------------------------------------

\begin{thebibliography}{10}
\providecommand{\url}[1]{{#1}}
\providecommand{\urlprefix}{URL }
\expandafter\ifx\csname urlstyle\endcsname\relax
  \providecommand{\doi}[1]{DOI~\discretionary{}{}{}#1}\else
  \providecommand{\doi}{DOI~\discretionary{}{}{}\begingroup
  \urlstyle{rm}\Url}\fi

\bibitem{ames}
Ames, W., Straughan, B.: Non-Standard and Improperly Posed Problems,
  \emph{Mathematics in Science and Engineering}, vol. 194.
\newblock Academic Press, San Diego (1997)

\bibitem{bernitsas}
Bernitsas, M.: Problems in marine riser design.
\newblock Marine Technology \textbf{19}(1), 73--82 (1982)

\bibitem{jjh}
Brouwers, J.: Analytical methods for predicting the response of marine risers.
\newblock In: Proceedings of the Koninklijke Nederlandse Akademie van
  Wetenschappen, Series B: Physical Sciences \textbf{85}(4), 381--400 (1982)

\bibitem{celebi}
{\c{C}}elebi, A., G{\"u}r, {\c{S}}., Kalantarov, V.: Structural stability and
  decay estimate for marine riser equations.
\newblock Mathematical and Computer Modelling \textbf{54}(11–12), 3182--3188
  (2011).
\newblock \doi{http://dx.doi.org/10.1016/j.mcm.2011.08.014}.
\newblock
  \urlprefix\url{http://www.sciencedirect.com/science/article/pii/S0895717711004948}

\bibitem{gur2010}
G{\"u}r, {\c{S}}.: Global asymptotic stability of solutions to nonlinear marine
  riser equation.
\newblock Journal of Inequalities and Applications \textbf{2010}(1), 504,670
  (2010).
\newblock \doi{10.1155/2010/504670}.
\newblock \urlprefix\url{http://dx.doi.org/10.1155/2010/504670}

\bibitem{haraux88}
Haraux, A., Zuazua, E.: Decay estimates for some semilinear damped hyperbolic
  problems.
\newblock Archive for Rational Mechanics and Analysis \textbf{100} (1988).
\newblock \doi{10.1007/BF00282203}.
\newblock \urlprefix\url{http://dx.doi.org/10.1007/BF00282203}

\bibitem{kalantarov97}
Kalantarov, V.K., Kurt, A.: The long-time behavior of solutions of a nonlinear
  fourth order wave equation, describing the dynamics of marine risers.
\newblock Zeitschrift f{\"u}r Angewandte Mathematik und Mechanik \textbf{77}
  (1997).
\newblock \doi{10.1002/zamm.19970770310}.
\newblock \urlprefix\url{http://dx.doi.org/10.1002/zamm.19970770310}

\bibitem{kohl93}
K{\"o}hl, M.: An extended liapunov approach to the stability assessment of
  marine risers.
\newblock Zeitschrift f{\"u}r Angewandte Mathematik und Mechanik \textbf{73}
  (1993).
\newblock \doi{10.1002/zamm.19930730208}.
\newblock \urlprefix\url{http://dx.doi.org/10.1002/zamm.19930730208}

\bibitem{marcati84}
Marcati, P.: Decay and stability for nonlinear hyperbolic equations.
\newblock Journal of Differential Equations \textbf{55} (1984).
\newblock \doi{10.1016/0022-0396(84)90087-1}.
\newblock \urlprefix\url{http://dx.doi.org/10.1016/0022-0396(84)90087-1}

\bibitem{effect2014}
Moghiseh, A., Chaloshtory, H.R., Rahi, A.: Effect of {Middle} {Tension} on
  {Dynamic} {Behavior} of {Marin} {Risers}.
\newblock Journal of Maritime Research \textbf{9}(1), 63--70 (2014).
\newblock
  \urlprefix\url{http://www.jmr.unican.es/index.php/jmr/article/view/170}

\bibitem{nakao91}
Nakao, M.: Remarks on the existence and uniqueness of global decaying solutions
  of the nonlinear dissipative wave equations.
\newblock Mathematische Zeitschrift \textbf{206} (1991).
\newblock \doi{10.1007/BF02571342}.
\newblock \urlprefix\url{http://dx.doi.org/10.1007/BF02571342}

\bibitem{rekt}
Rektorys, K.: Variational Methods in Mathematics, Science and Engineering.
\newblock Springer Netherlands (1977)

\end{thebibliography}
\end{document}